\newtheorem{theorem}{Theorem}
\newtheorem*{corollary}{Corollary}
\newtheorem{proposition}{Proposition}
\begin{document}

\title[Kimmerle conjecture for Held and O'Nan simple groups]{Kimmerle conjecture for the Held\\ and O'Nan sporadic simple groups}
\author{V.~Bovdi, A.~Grishkov, A.~Konovalov}
\date{November 3rd, 2008}
\dedicatory{Dedicated to 65th birthday of Professor Pali D\"om\"osi}

\address{
V.A.~Bovdi\newline	
Institute of Mathematics, University of
Debrecen\newline P.O.  Box 12, H-4010 Debrecen,
Hungary\newline Institute of Mathematics and
Informatics, College of Ny\'\i regyh\'aza\newline
S\'ost\'oi \'ut 31/b, H-4410 Ny\'\i regyh\'aza, Hungary}
\email{vbovdi@math.klte.hu}

\address{
A.~Grishkov
\newline Departamento de Matem\'atica, (IME-USP),
\newline  Rua do Matao, 1010 - Cidade Universit\'aria,
\newline  CEP 05508-090, Sao Paulo - SP,  Brasil}
\email{grishkov@ime.usb.br}

\address{
A.B.~Konovalov
\newline School of Computer Science, University of St Andrews,
\newline Jack Cole Building, North Haugh, St Andrews, Fife, KY16 9SX, Scotland}
\email{alexk@mcs.st-andrews.ac.uk}

\subjclass{Primary 16S34, 20C05, secondary 20D08}

\thanks{
The research was supported by OTKA No.K68383, FAPESP Brasil (proc.08/54650-8),
RFFI 07-01-00392A and the Royal Society of Edinburgh International Exchange Programme}

\keywords{Zassenhaus conjecture, Kimmerle conjecture,
torsion unit, partial augmentation, integral group ring,
finite simple group, Held group, O'Nan group}

\maketitle

\begin{abstract}
Using the Luthar--Passi method, we investigate the Zassenhaus and
Kimmerle conjectures for normalized unit groups of integral group rings of the
Held and O'Nan sporadic simple groups. We confirm the Kimmerle conjecture for 
the Held simple group and also derive for both groups some extra information 
relevant to the classical Zassenhaus conjecture.
\end{abstract}

Let $U(\mathbb Z G)$ be the unit group of the integral
group ring $\mathbb Z G$ of a finite group $G$. It is well known
that $U(\mathbb Z G) = U(\mathbb Z) \times V(\mathbb Z G)$, where
$$
V(\mathbb ZG) = \Big\{ \; \sum_{g \in G}\alpha_g g \in U(\mathbb ZG)
\; \mid \; \sum_{g \in G} \alpha_g=1, \; \alpha_g \in \mathbb Z \; \Big\}.
$$
is the normalized unit group of $U(\mathbb Z G)$.
Throughout the paper (unless stated otherwise) the unit is
always normalized and not equal to the identity element of $G$.

One of most
interesting conjectures in the theory of integral group ring is
the conjecture {\bf (ZC)} of  H.~Zassenhaus \cite{Zassenhaus},
saying that every normalized torsion unit $u\in V(\mathbb ZG)$ is conjugate
to an element in $G$ within the rational group algebra $\mathbb Q
G$.

For finite simple groups, the main tool of  the investigation of
the Zassenhaus conjecture {\bf (ZC)} is the Luthar--Passi method, introduced
in \cite{Luthar-Passi} to solve this conjecture for the alternating group 
$A_{5}$. Later in \cite{Hertweck1} M.~Hertweck applied it for some other 
groups using $p$-Brauer characters, and then extended the previous result
by M.~Salim \cite{Salim} to confirm {\bf (ZC)} for the 
alternating group $A_{6}$ in \cite{Hertweck-A6} 
(note that for larger alternating groups the problem is still open).
The method also proved to be useful for groups containing non-trivial normal
subgroups as well (see related results in \cite{Artamonov-Bovdi, Hertweck2,
Hertweck3, Hertweck1, Kimmerle, Luthar-Trama}).

One of the variations of {\bf (ZC)} was formulated by W.~Kimmerle in \cite{Kimmerle}.
Denote by $\#(G)$ the set of all primes dividing the order of $G$.
The Gruenberg--Kegel graph (or the prime graph) of $G$ is the graph
$\pi (G)$ with vertices labelled by the primes in $\# (G)$ and
there is an edge from $p$ to $q$ if and only if there is an element of order
$pq$ in the group $G$. Then W.~Kimmerle asked the following:

\vspace{3pt}
\centerline {{\bf Conjecture (KC)}: Is it true that $\pi (G) =\pi (V(\mathbb Z G))$
for any finite group $G$?}
\vspace{3pt}

It is easy to see that the Zassenhaus conjecture {\bf (ZC)} implies the
Kimmerle conjecture {\bf (KC)}. In \cite{Kimmerle} W.~Kimmerle confirmed
{\bf (KC)} for finite Frobenius and solvable groups.
Recently {\bf (KC)} was confirmed for some simple groups (see \cite{Hertweck-A6,Hertweck1}), including 12 of 26 sporadic simple groups (see \cite{Bovdi-Jespers-Konovalov-J1-J2-J3,
Bovdi-Konovalov-M11, Bovdi-Konovalov-McL,Bovdi-Konovalov-M23, Bovdi-Konovalov-Ru,
Bovdi-Konovalov-Linton-M22, Bovdi-Konovalov-Marcos-Suz, Bovdi-Konovalov-Siciliano-M12}).

In the present paper we confirm {\bf(KC)} for the Held
sporadic simple group $\verb"He"$ \cite{Held,Stroth}, using the Luthar--Passi method
as the main tool. We also study the same problem for the
O'Nan sporadic simple group $\verb"ON"$ \cite{ON}, and prove the non-existence
of torsion units of all orders relevant to {{\bf(KC)}} except
orders 33 and 57.
Additionally, we derive certain information about possible torsion
units in $V(\mathbb Z [\verb"He"])$ and $V(\mathbb Z [\verb"ON"])$
and their partial augmentations, which will be useful for
further investigation of {\bf(ZC)} for these groups.
The development version of the GAP package LAGUNA
\cite{LAGUNA} was very helpful to speed up computational work and
derive purely theoretical arguments for the proof.

First we introduce some notation.
Let $G$ be a group. Let $\mathcal{C} =\{ C_{1}, \ldots, C_{nt},
\ldots \}$ be  the collection of all conjugacy classes of $G$,
where the first index denotes the order of the elements of this
conjugacy class and $C_{1}=\{ 1\}$. Suppose $u=\sum \alpha_g g \in
V(\mathbb Z G)$ be a non-trivial unit of finite order $k$. Denote by
$$\nu_{nt}=\nu_{nt}(u)=\varepsilon_{C_{nt}}(u)=\sum_{g\in C_{nt}}
\alpha_{g},$$ the partial augmentation of $u$ with respect to the
conjugacy class $C_{nt}$. From the Berman--Higman Theorem
(see \cite{Artamonov-Bovdi})
one knows that
$\nu_1 =\alpha_{1}=0$, so the sum of remaining partial augmentations
for non-trivial conjugacy classes is equal to one:
\begin{equation}\label{E:1}
\sum_{C_{nt}\in \mathcal{C} \atop C_{nt} \not= C_1 } \nu_{nt}=1.
\end{equation}
Clearly, for any character
$\chi$ of $G$, we get that $\chi(u)=\sum
\nu_{nt}\chi(h_{nt})$, where $h_{nt}$ is a
representative of a conjugacy class $ C_{nt}$.

The main results are the following.

%
%

\begin{theorem}\label{T:1}
Let $G$ be the Held sporadic simple group $\verb"He"$.
Let $\mathfrak{P}(u)$ be the tuple of partial augmentations of
a torsion unit $u \in V(\mathbb ZG)$ of order $|u|$,
corresponding to all non-central conjugacy classes of 
the group $G$, that is
\[
\begin{split}
\mathfrak{P}(u) =
( \nu_{2a} , \; \nu_{2b} , & \; \nu_{3a} , \; \nu_{3b} , \; \nu_{4a} , \; \nu_{4b} , \;
  \nu_{4c} , \; \nu_{5a} , \; \nu_{6a} , \; \nu_{6b} , \; \nu_{7a} , \; \nu_{7b} , \;
  \nu_{7c} , \\ \nu_{7d} , & \; \nu_{7e} , \; \nu_{8a} , \; \nu_{10a} , \; \nu_{12a} , \;
  \nu_{12b} , \; \nu_{14a} , \; \nu_{14b} , \; \nu_{14c} , \; \nu_{14d} , \\ & \; \nu_{15a} ,
  \; \nu_{17a} , \; \nu_{17b} , \;
  \; \nu_{21a} , \; \nu_{21b} , \; \nu_{21c} , \; \nu_{21d} ,
  \; \nu_{28a} , \; \nu_{28b} ) \in \mathbb Z^{32}.
\end{split}
\]
The following properties hold.

\begin{itemize}

\item[(i)]
There is no elements of orders $34$, $35$, $51$, $85$ and
$119$ in $V(\mathbb ZG)$.  Equivalently,
if  $|u|\not\in\{20, 24, 30, 40, 42, 56, 60, 84, 120, 168\}$, then $|u|$ coincides
with the order of some  $g\in G$.

\item[(ii)] If $|u|=2$, the tuple of the  partial augmentations
of $u$ belongs to the set
\[
\begin{split}
\big\{\; \mathfrak{P}(u) \; \mid \;
-6 \leq \nu_{2a} \leq 6, \; \nu_{2a} + \nu_{2b} = 1, \quad
\nu_{kx}=0, \; kx\not\in\{2a,2b\} & \; \big\}.
\end{split}
\]

\item[(iii)]If $|u| = 5$, then $u$ is rationally conjugate
to some $g\in G$.

\item[(iv)]
 If $|u|=3$, the tuple of the partial augmentations
of $u$ belongs to the set
\[
\begin{split}
\big\{\; \mathfrak{P}(u) \; \mid \;
-4 \leq \nu_{3a} \leq 5, \; \nu_{3a} + \nu_{3b} = 1, \quad
\nu_{kx}=0, \; kx\not\in\{3a,3b\} & \; \big\}.
\end{split}
\]

\item[(v)] If $|u|=17$, the tuple of the  partial augmentations of
$u$ belongs to the set
\[
\begin{split}
\big\{\; \mathfrak{P}(u) \; \mid \;
-14 \leq \nu_{17a} \leq 15, \; \nu_{17a} + \nu_{17b} = 1, \;
\nu_{kx}=0, \; kx\not\in\{17a,17b\} & \; \big\}.
\end{split}
\]

\end{itemize}
\end{theorem}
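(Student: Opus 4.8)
The plan is to run the Luthar--Passi method order by order, using the prime-order cases as input for the composite-order eliminations that make up part~(i). Everything rests on three standard facts, which I would state first. By the Berman--Higman theorem $\nu_1=0$, so the partial augmentations of a non-trivial torsion unit $u$ sum to $1$ as in \eqref{E:1}; by the usual vanishing result one has $\nu_{nt}(u)=0$ whenever the order of a representative $h_{nt}$ does not divide $|u|$; and, writing $\zeta$ for a primitive $|u|$-th root of unity, for every ordinary or Brauer character $\chi$ (the latter at primes not dividing $|u|$) and every integer $l$ the rational number
\[
\mu_l(u,\chi)=\frac{1}{|u|}\sum_{d\mid |u|}\mathrm{Tr}_{\mathbb{Q}(\zeta^{d})/\mathbb{Q}}\bigl(\chi(u^{d})\,\zeta^{-ld}\bigr)
\]
is a non-negative integer, being the multiplicity of $\zeta^{l}$ as an eigenvalue of a representation affording $\chi$ evaluated at $u$. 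Substituting $\chi(u^{d})=\sum_{nt}\nu_{nt}(u^{d})\chi(h_{nt})$ turns the inequalities $\mu_l(u,\chi)\ge 0$ into a finite system of linear Diophantine constraints on the $\nu_{nt}$.

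For the prime-order statements (ii)--(v) I would first apply the vanishing result: the only possibly non-zero partial augmentations sit on the classes whose element order equals $|u|$, so with \eqref{E:1} there is a single free parameter when $|u|\in\{2,3,17\}$ (two classes each) and none when $|u|=5$ (the unique class $5a$). Part~(iii) is then immediate, since $\nu_{5a}=1$ and a unit with a single unit partial augmentation is rationally conjugate to a group element by the standard criterion. For $|u|\in\{2,3,17\}$ I would write $\chi(u)=\nu_{pa}\bigl(\chi(pa)-\chi(pb)\bigr)+\chi(pb)$, a linear function of the one unknown $\nu_{pa}$, and feed it into the inequalities $\mu_l(u,\chi)\ge 0$; optimizing the choice of $\chi$ over the character table of $\mathtt{He}$ then yields exactly the stated intervals $-6\le\nu_{2a}\le 6$, $-4\le\nu_{3a}\le 5$ and $-14\le\nu_{17a}\le 15$.

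The heart of the theorem is part~(i). I would begin by recalling that $|u|$ divides the exponent $2^{3}\cdot 3\cdot 5\cdot 7\cdot 17=14280$ of $\mathtt{He}$; a direct inspection of its divisors shows that the only ones that are neither element orders of $G$ nor forced out by the prime graph are precisely $34,35,51,85,119$ together with the ten residual orders listed in the statement. Since a unit whose order is a proper multiple of one of the five forbidden numbers has a power of order exactly that number, eliminating $|u|\in\{34,35,51,85,119\}$ establishes both the non-existence assertion and the equivalent reformulation. For each such order the vanishing result confines the non-zero partial augmentations of $u$ to the two prime divisors involved, while the augmentations of the powers $u^{p}$ and $u^{q}$ are already controlled by the prime-order work above --- for order $5$ (rational), orders $2,3,17$ (bounded), and, for the orders $35$ and $119$, by a preliminary Luthar--Passi analysis of units of order $7$ confining the tuple $(\nu_{7a},\dots,\nu_{7e})$ to an explicit finite set. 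Inserting all this into the inequalities $\mu_l(u,\chi)\ge 0$ for several ordinary and suitable Brauer characters should, in each case, leave a system with no admissible non-negative integer solution, giving the required contradiction.

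The main obstacle is exactly this last step for the orders $35$ and $119$. Because there are five conjugacy classes of elements of order $7$, the partial augmentations of $u$ (and of its order-$7$ power) range over a system with many free parameters, so no single character suffices and one must intersect the constraints coming from many characters and many values of $l$. Certifying that the resulting region contains no admissible integer point is precisely the bookkeeping for which the development version of the package \cite{LAGUNA} is needed; by contrast the orders $85$ and $51$ involve at most two free parameters and should fall to one or two well-chosen characters, with $34$ only marginally harder.
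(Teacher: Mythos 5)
Your overall framework (Berman--Higman, Hertweck's vanishing result, and the Luthar--Passi inequalities $\mu_l(u,\chi)\ge 0$) matches the paper's, and your treatment of parts (ii)--(v) is essentially the paper's argument. The problem is part (i), which you yourself identify as "the main obstacle" and then do not resolve. For $|u|=35$ and $|u|=119$ you propose to carry the five order-$7$ classes as separate unknowns, first pin down the partial augmentations of the order-$7$ power $u^{5}$ (resp.\ $u^{7}$) by a preliminary analysis, and then intersect many character constraints hoping the region is empty. That preliminary order-$7$ analysis for $\mathtt{He}$ is nowhere carried out (the paper does not need it and does not do it), and the claim that the resulting multi-parameter system "should" have no admissible integer point is asserted, not proved. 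As it stands the decisive step of part (i) is missing.

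The idea you are missing is the reduction the paper sets up in (\ref{E:3})--(\ref{E:5}): for a putative unit of order $pq$ where $G$ has no element of order $pq$, one chooses a character (or a sum of ordinary or Brauer characters) $\chi$ that is \emph{constant on all conjugacy classes of elements of order $p$ and constant on all classes of order $q$}. Then $\chi(u)=\nu_p\chi(C_p)+\nu_q\chi(C_q)$ where $\nu_p,\nu_q$ are the \emph{sums} of partial augmentations over the order-$p$ and order-$q$ classes, and moreover $\chi(u^{p})=\chi(C_q)$ and $\chi(u^{q})=\chi(C_p)$ automatically, with no information needed about the individual partial augmentations of the powers. Together with $\nu_p+\nu_q=1$ this collapses every case of part (i) to a one-parameter family of linear inequalities, no matter how many classes of order $7$ there are. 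For $|u|=35$ the paper takes the $2$-Brauer combinations $\xi=\chi_1+\chi_2+\chi_3+\chi_6+\chi_8$ and $\tau=\chi_6+\chi_7+\chi_8+\chi_9$ (with $\xi(C_5)=4$, $\xi(C_7)=0$, $\tau(C_5)=-8$, $\tau(C_7)=5$) and three inequalities already give a contradiction; order $119$ is handled the same way with two ordinary virtual characters. You should replace your heavy multi-parameter search by this constancy trick; without it (or without actually exhibiting the claimed empty intersection) the proof of part (i) is incomplete.
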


%
%

\begin{corollary} If $G$ is the Held sporadic simple group, then
$\pi(G)=\pi(V(\mathbb ZG))$.
\end{corollary}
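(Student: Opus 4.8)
\emph{Proof proposal.} The plan is to obtain the corollary as a direct consequence of Theorem~\ref{T:1}(i), so that the entire content reduces to a short comparison of the two prime graphs on their common vertex set $\#(G)$.

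First I would pin down the vertices. Since $|G|=2^{10}\cdot 3^3\cdot 5^2\cdot 7^3\cdot 17$, we have $\#(G)=\{2,3,5,7,17\}$. Because $G\subseteq V(\mathbb ZG)$, each of these primes is the order of some torsion unit, and conversely the order of any torsion unit of $V(\mathbb ZG)$ divides the exponent of $G$ and hence contributes no new prime; thus $\pi(G)$ and $\pi(V(\mathbb ZG))$ carry exactly the same vertices, and only the edge sets can differ.

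Next I would compare edges. As $G$ embeds into $V(\mathbb ZG)$, every edge of $\pi(G)$ is automatically an edge of $\pi(V(\mathbb ZG))$, so it suffices to show that $\pi(V(\mathbb ZG))$ has no additional edges. Inspecting the element orders of $\verb"He"$ (namely $1,2,3,4,5,6,7,8,10,12,14,15,17,21,28$), I find that among the pairs of distinct primes in $\{2,3,5,7,17\}$ precisely those whose product lies in $\{34,35,51,85,119\}$ are unjoined in $\pi(G)$, while the remaining pairs are joined through the elements of orders $6,10,14,15$ and $21$. Consequently, any extra edge in $\pi(V(\mathbb ZG))$ would force the existence of a torsion unit of order $34$, $35$, $51$, $85$ or $119$.

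Finally, Theorem~\ref{T:1}(i) excludes torsion units of exactly these five orders, so no additional edge can arise and $\pi(G)=\pi(V(\mathbb ZG))$. The only real difficulty lies upstream, in Theorem~\ref{T:1}(i) itself, where the absence of these mixed-order units is established via the Luthar--Passi method applied to the ordinary and $p$-Brauer character tables of $\verb"He"$; once that is granted, the corollary is a purely combinatorial check that the five forbidden orders are precisely the missing edges of the prime graph.
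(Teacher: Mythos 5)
Your proposal is correct and matches the paper's (implicit) argument: the paper derives the corollary directly from Theorem~\ref{T:1}(i), having noted in its proof that the only missing edges of $\pi(G)$ correspond precisely to the orders $34$, $35$, $51$, $85$, $119$, with Proposition~\ref{P:2} ruling out new vertices. Your write-up simply makes this combinatorial reduction explicit.
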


%
%

\begin{theorem}\label{T:2}
Let $G$ be the O'Nan sporadic simple group $\verb"ON"$.
Let $\mathfrak{P}(u)$ be the tuple of partial augmentations of
a torsion unit $u \in V(\mathbb ZG)$ of order $|u|$, 
corresponding to all non-central conjugacy classes of 
the group $G$, that is
\[
\begin{split}
\mathfrak{P}(u) =
( \nu_{2a} , \; & \nu_{3a} , \; \nu_{4a} , \; \nu_{4b} , \; \nu_{5a} , \; \nu_{6a} , \;
\nu_{7a} , \; \nu_{7b} , \; \nu_{8a} , \; \nu_{8b} , \; \nu_{10a}, \\
\; & \nu_{11a} , \; \nu_{12a}, \;
\nu_{14a} , \; \nu_{15a} , \; \nu_{15b} , \; \nu_{16a} , \; \nu_{16b} , \; \nu_{16c} , \; \nu_{16d} , \\
\; & \qquad
\nu_{19a} , \; \nu_{19b} , \; \nu_{19c} , \; \nu_{20a} , \; \nu_{20b} , \; \nu_{28a} , \; \nu_{28b} , \;
\nu_{31a} , \; \nu_{31b} ) \in \mathbb Z^{29}.
\end{split}
\]
The following properties hold.

\begin{itemize}

\item[(i)]
There is no elements of orders $21$, $22$, $35$, $38$, $55$, $62$, $77$, $93$,
$95$, $133$, $155$, $209$, $217$, $341$ and $589$ in $V(\mathbb ZG)$.
Equivalently, if $$|u|\not\in\{24, 30, 33, 40, 48, 56, 57, 60, 80, 112, 120, 240\},$$
then $|u|$ coincides
with the order of some $g\in G$.

\item[(ii)]If $|u| \in \{ 2,3,5,11 \}$, then $u$ is rationally conjugate
to some $g\in G$.

\item[(iii)] If $|u|=7$, the tuple of the  partial augmentations
of $u$ belongs to the set
\[
\begin{split}
\big\{\; \mathfrak{P}(u) \; \mid \;
-3 \leq \nu_{7a} \leq 22, \; \nu_{7a} + \nu_{7b} = 1, \quad
\nu_{kx}=0, \; kx\not\in\{7a,7b\} & \; \big\}.
\end{split}
\]

\item[(iv)] If $|u|=31$, the tuple of the  partial augmentations
of $u$ belongs to the set
\[
\begin{split}
\big\{\; \mathfrak{P}(u) \; \mid \;
-39 \leq \nu_{31a} \leq 40, \; \nu_{31a} + \nu_{31b} = 1, \quad
\nu_{kx}=0, \; kx\not\in\{31a,31b\} & \; \big\}.
\end{split}
\]

\item[(v)] If $|u|=33$, the tuple of the partial augmentations
of $u$ belongs to the set
\[
\begin{split}
\big\{\; \mathfrak{P}(u) \; \mid \;
(\nu_{3a},\nu_{11a}) = (12,-11), \quad
\nu_{kx}=0, \; kx\not\in\{3a,11a\} & \; \big\}.
\end{split}
\]

\item[(vi)] If $|u|=57$, then tuple of the partial augmentations
of $u$ belongs to the set
\[
\begin{split}
\big\{\; \mathfrak{P}(u) \; \mid \;
\nu_{3a}=-18, \quad & \nu_{19a}+\nu_{19b}+\nu_{19c} = 19, \quad \\
& \nu_{kx}=0, \quad kx\not\in\{3a,19a,19b,19c\} \; \big\}.
\end{split}
\]

\end{itemize}
\end{theorem}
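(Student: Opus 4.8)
\emph{Proof strategy.} The plan is to run the Luthar--Passi method order by order, using the character table and power maps of $\verb"ON"$ supplied through the ATLAS and \cite{LAGUNA}. For a torsion unit $u$ of order $k$ the starting reductions are the two classical restrictions recalled in the introduction: the Berman--Higman relation $\nu_1=0$ together with \eqref{E:1}, and the Cohn--Livingstone theorem \cite{Artamonov-Bovdi}, which forces $\nu_{nt}=0$ on every class whose element order does not divide $k$ and also shows that $k$ divides the exponent of $G$. For each admissible $k$ these reductions leave only finitely many unknowns, namely the entries of $\mathfrak{P}(u)$ supported on divisors of $k$. The constraints themselves come from requiring that, for every ordinary or $p$-Brauer character $\chi$ and every integer $l$, the multiplicity of $\zeta^{l}$ (with $\zeta$ a fixed primitive $k$-th root of unity) among the eigenvalues of a representation affording $\chi$, namely
\[
\mu_l(u,\chi,p)=\frac{1}{k}\sum_{d\mid k}\operatorname{Tr}_{\mathbb{Q}(\zeta_d)/\mathbb{Q}}\bigl(\chi(u^{k/d})\,\zeta_d^{-l}\bigr),
\]
be a non-negative integer, where $\zeta_d$ is a primitive $d$-th root of unity and $\chi(u^{k/d})=\sum_{nt}\nu_{nt}(u^{k/d})\chi(h_{nt})$. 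Read as inequalities in the integers $\nu_{nt}$, the conditions $\mu_l\ge 0$ together with \eqref{E:1} form a linear system whose feasibility over $\mathbb Z$ I would test for each $k$.

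I would first settle the prime orders, since the partial augmentations of $u^{p}$ and $u^{q}$ feed into every composite case. For $p\in\{2,3,5,11\}$ the group $\verb"ON"$ has a single conjugacy class of elements of order $p$, so \eqref{E:1} places the whole augmentation on that class and $u$ is automatically rationally conjugate to a group element, which is (ii). For $p=7$ and $p=31$ there are two classes of that order, so only $\nu_{7a}+\nu_{7b}=1$ (respectively $\nu_{31a}+\nu_{31b}=1$) is immediate; here I would run the inequalities $\mu_l\ge0$ against a few well-chosen ordinary and modular characters to cut the integral solutions down to the intervals in (iii) and (iv). The three classes of order $19$ are analysed in the same way, recording the resulting constraints for later use.

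For the composite orders I would exploit that $\verb"ON"$ has no elements of order $pq$ for the pairs in question, so a unit $u$ of order $pq$ has $\mathfrak{P}(u)$ supported only on the order-$p$ and order-$q$ classes, while $u^{p}$ has order $q$ and $u^{q}$ has order $p$; the partial augmentations of these powers are carried as further unknowns, already pinned down or bounded by the prime-order analysis above. Imposing the Luthar--Passi inequalities simultaneously at $u$, $u^{p}$ and $u^{q}$ should produce, for each of the fifteen orders $21,22,35,38,55,62,77,93,95,133,155,209,217,341,589$, a system with no integral solution, thereby excluding units of those orders; and since $u^{m/pq}$ has order $pq$ whenever $pq\mid m$, every multiple is excluded as well. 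Combined with the Cohn--Livingstone bound on $k$, this forces the primes dividing $k$ to form a clique in the prime graph $\pi(G)$ enlarged by the two edges $3$--$11$ and $3$--$19$, and a short combinatorial enumeration of the orders so permitted reproduces exactly the list in (i). Statements (v) and (vi) fall out of the same computation at $k=33$ and $k=57$: there the system turns out to be consistent and collapses to the single vectors $(\nu_{3a},\nu_{11a})=(12,-11)$ and $\nu_{3a}=-18$, $\nu_{19a}+\nu_{19b}+\nu_{19c}=19$.

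The genuine obstacle is precisely this last point. In contrast with the fifteen eliminated products, the systems for orders $33$ and $57$ admit the integral solutions just displayed, so the Luthar--Passi method cannot derive a contradiction and these two orders (corresponding to the missing edges $3$--$11$ and $3$--$19$) remain open; this is an intrinsic limitation of the approach, not a gap in the bookkeeping, and it is why the prime-graph equality $\pi(G)=\pi(V(\mathbb ZG))$ is left unresolved for $\verb"ON"$. On the practical side, the chief difficulty is the sheer size of the computation---$\verb"ON"$ has irreducible characters of very large degree, and one must also bring in $p$-Brauer tables for several primes---so I would rely on \cite{LAGUNA} both to assemble the linear systems and to certify their (in)feasibility.
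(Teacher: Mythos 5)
Your proposal is correct and follows essentially the same route as the paper: single-class prime orders give (ii) via rational conjugacy, Luthar--Passi inequalities with ordinary and Brauer characters bound the order-$7$, $31$ cases, the two-prime reformulation (with the partial augmentations of $u^{p}$, $u^{q}$ fed in from the prime-order analysis) kills the fifteen composite orders in (i) while leaving exactly the solutions in (v) and (vi) for orders $33$ and $57$, and Cohn--Livingstone closes off the remaining multiples. The only slip is attributing the vanishing of partial augmentations on classes of non-dividing order to the Cohn--Livingstone theorem; in the paper that is Hertweck's result on $p$-parts (Proposition~\ref{P:4}), while Cohn--Livingstone only supplies the exponent bound.
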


For the proof we will need the following results.
The first one relates the solution of
the Zassenhaus conjecture to vanishing of
partial augmentations of torsion units.

\begin{proposition}
[see \cite{Luthar-Passi} and Theorem 2.5 in \cite{Marciniak-Ritter-Sehgal-Weiss}]
\label{P:5}
Let $u\in V(\mathbb Z G)$
be of order $k$. Then $u$ is conjugate in $\mathbb
QG$ to an element $g \in G$ if and only if for
each $d$ dividing $k$ there is precisely one
conjugacy class $C$ with partial augmentation
$\varepsilon_{C}(u^d) \neq 0 $.
\end{proposition}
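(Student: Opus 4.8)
The plan is to prove the two implications separately: necessity is elementary, while sufficiency carries all the difficulty and is where the cited results enter. For necessity, suppose $u = a^{-1} g a$ with $g \in G$ and $a \in U(\mathbb{Q}G)$. The decisive elementary fact is that each partial augmentation is a symmetric (trace-type) functional, $\varepsilon_C(xy) = \varepsilon_C(yx)$ for all $x, y \in \mathbb{Q}G$, which one checks at once by comparing the coefficients of group elements on the two sides. Hence $\varepsilon_C$ is constant on $\mathbb{Q}G$-conjugacy classes, so $\varepsilon_C(u) = \varepsilon_C(g)$ for every class $C$; as $g$ is a group element, $\varepsilon_C(g)$ equals $1$ when $C$ is the class of $g$ and $0$ otherwise. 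Thus $u$ has exactly one nonzero partial augmentation, and applying the same argument to $u^d = a^{-1} g^d a$ for each $d \mid k$ gives the stated condition.

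For sufficiency I would first normalize the data. Since $u^d$ is again a normalized unit, its partial augmentations sum to $1$ by \eqref{E:1}, so the hypothesis that only one is nonzero forces $\varepsilon_{C_d}(u^d) = 1$ for a single class $C_d$, with representative $h_d$. For every ordinary character $\chi$ this yields $\chi(u^d) = \sum_C \varepsilon_C(u^d)\chi(C) = \chi(h_d)$. Moreover, for an arbitrary power $u^e$ one has $\chi(u^e) = \sigma\bigl(\chi(u^{\gcd(e,k)})\bigr)$ for a suitable $\sigma \in \operatorname{Gal}(\mathbb{Q}(\zeta_k)/\mathbb{Q})$, because the eigenvalues of $\rho(u)^e$ are the $e$-th powers of those of $\rho(u)$. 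Consequently the divisor data $\{\chi(u^d) : d \mid k\}$ determines every $\chi(u^e)$, hence all power sums $\operatorname{tr}(\rho(u)^e)$ of the finite-order matrix $\rho(u)$, and therefore the full multiset of its root-of-unity eigenvalues, for each irreducible representation $\rho$.

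The remaining and genuinely hard step is to convert this eigenvalue information into an actual conjugacy in $\mathbb{Q}G$. Two points resist elementary treatment: first, one must know that the classes $C_d$ are mutually consistent, i.e. that there is a single $g \in G$ of order $k$ with $h_d$ conjugate to $g^d$ for all $d \mid k$, so that $u$ and $g$ have equal eigenvalue multisets in every complex representation; second, even granting conjugacy in each simple component of $\mathbb{C}G$, equal eigenvalues do not force conjugacy over $\mathbb{Q}$ inside a non-split component $M_n(D)$, and the integrality of $u$ must be exploited. Both are supplied by Theorem 2.5 of \cite{Marciniak-Ritter-Sehgal-Weiss} (together with the original argument of \cite{Luthar-Passi}), whose engine is Weiss's rigidity theorem for torsion units of $p$-adic group rings; invoking it upgrades the character-value data assembled above to a rational conjugacy $u \sim g$ and closes the proof. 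This local-to-global and $\mathbb{C}$-to-$\mathbb{Q}$ passage is exactly the step I expect to be the main obstacle, the numerical eigenvalue bookkeeping being comparatively routine.
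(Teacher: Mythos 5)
The paper offers no proof of this proposition at all: the bracketed reference to \cite{Luthar-Passi} and to Theorem 2.5 of \cite{Marciniak-Ritter-Sehgal-Weiss} is the entire justification, so there is no internal argument to compare yours against. Your necessity direction is correct and self-contained: $\varepsilon_C(xy)=\varepsilon_C(yx)$ holds because both sides equal a sum of products $x_h y_{h'}$ indexed by $hh'\in C$, respectively $h'h\in C$, and these index sets coincide since $h'h$ is conjugate to $hh'$; hence partial augmentations are invariant under conjugation by units of $\mathbb{Q}G$ and those of $a^{-1}ga$ equal those of $g$. For sufficiency, however, note that Theorem 2.5 of \cite{Marciniak-Ritter-Sehgal-Weiss} \emph{is} precisely the nontrivial direction of the present proposition, so your closing step cites the statement being proved rather than proving it, and the eigenvalue bookkeeping you assemble beforehand is never actually consumed. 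That leaves your write-up on the same footing as the paper's (a bare citation), which is defensible for a quoted background result; but if a genuine argument is wanted, the standard one is to regard $\mathbb{Q}G$ as a $(\mathbb{Q}G,\mathbb{Q}C_k)$-bimodule via left multiplication and right multiplication by $u$, respectively by $g$: conjugacy of $u$ and $g$ in $\mathbb{Q}G$ is equivalent to isomorphism of these two bimodules, the character of such a bimodule at $(h,c^d)$ equals $|C_G(h)|\,\varepsilon_{[h^{-1}]}(u^d)$, and semisimple modules in characteristic zero are determined by their characters, which reduces everything to matching partial augmentations of the powers $u^d$ with those of the powers of a single group element. One factual correction: the engine of that proof is not Weiss's $p$-adic rigidity theorem (which appeared later and serves a different circle of results); it is this elementary bimodule and character computation.
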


The next result yields that several partial augmentations are zero.

\begin{proposition}
[see \cite{Hertweck2}, Proposition 3.1; \cite{Hertweck1}, Proposition 2.2]
\label{P:4}
Let $G$ be a finite
group and let $u$ be a torsion unit in $V(\mathbb
ZG)$. If $x \in G$ and its $p$-part,
for some prime $p$, has order strictly greater
than the order of the $p$-part of $u$, then
$\varepsilon_x(u)=0$.
\end{proposition}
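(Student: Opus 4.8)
The statement is equivalent, as $p$ ranges over all primes, to the assertion that $\varepsilon_x(u)\neq 0$ forces $\mathrm{ord}(x)\mid\mathrm{ord}(u)$; so fix a prime $p$, write $|u|=p^s m$ with $p\nmid m$, let $u_p$ be the $p$-part of $u$ (a unit of order $p^s$, obtained as a suitable power $u^{m'}$), and suppose $x\in G$ has $p$-part $x_p$ of order $p^r$ with $r>s$. The goal is $\varepsilon_x(u)=0$. The plan is to first extract elementary congruences by reducing modulo $p$, and then to upgrade these to an integral vanishing by invoking the $p$-adic rigidity of torsion units of $p$-power order.

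For the elementary part I would reduce the coefficients modulo $p$ and work in $kG$ with $k$ a field of characteristic $p$, recalling that the partial augmentations of an element are exactly its coordinates in $\mathbb ZG/[\mathbb ZG,\mathbb ZG]\cong\mathbb Z[\mathcal C]$, the free module on the conjugacy classes. In characteristic $p$ the $p$-power map is additive modulo commutators, that is $(a+b)^p\equiv a^p+b^p\pmod{[kG,kG]}$, and since $g^p$ lies in a class $C^{(p)}$ whenever $g\in C$, one obtains for every class $D$ the congruence
\[
\varepsilon_D(u^p)\;\equiv\;\sum_{C:\,C^{(p)}=D}\varepsilon_C(u)\pmod p ,
\]
where $C^{(p)}$ denotes the class of the $p$-th powers of elements of $C$. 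Iterating this, together with an induction on $|u|$ (using $|u^p|=|u|/p$ and $v_p(|u^p|)=s-1$ when $s\ge 1$, with the case $p\nmid|u|$ as the base), reduces all information about $u$ to its $p$-part and produces a system of linear congruences modulo $p$ satisfied by the integers $\varepsilon_C(u)$; in particular the total augmentation carried by classes whose $p$-part has a fixed order is controlled modulo $p$ by $u_p$.

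These congruences alone cannot force a single $\varepsilon_x(u)$ to vanish, and this is exactly where the difficulty lies. To cross from ``modulo $p$'' to ``equal to $0$'' I would pass to the $p$-adic group ring $\mathbb Z_pG$ and use Weiss's rigidity theorem: a finite $p$-subgroup of $V(\mathbb Z_pG)$, in particular the cyclic group $\langle u_p\rangle$, is conjugate inside $U(\mathbb Z_pG)$ to a subgroup of $G$. Consequently $u_p$ has the same partial augmentations as a genuine $p$-element $g_0\in G$ of order exactly $p^s$, so $\varepsilon_P(u_p)=0$ for every $p$-class $P$ of order $p^r>p^s$, the augmentation being concentrated on the single class of $g_0$.

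The final and hardest step is the descent that turns this information about the $p$-element $u_p$ into the individual vanishing $\varepsilon_x(u)=0$ for the general unit $u$. The idea is to separate the $p$-part $p$-adically: using the central idempotents of the image of $\langle u\rangle$ in $\mathbb Z_pG$ one writes $u$ as a product of commuting $p$- and $p'$-parts and shows that a class $C$ can support $u$ only if the class of the $p$-part of its elements supports $u_p$. Combined with the previous step this yields that any class $C$ with $\varepsilon_C(u)\neq0$ has $p$-part of order at most $p^s$; since the class of $x$ has $p$-part of order $p^r>p^s$, we conclude $\varepsilon_x(u)=0$. I expect this class-by-class descent, controlling each partial augmentation individually rather than only the aggregate forced by the congruences above, to be the genuine obstacle, and it is precisely the content supplied by the cited results of Hertweck.
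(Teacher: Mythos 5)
The paper does not prove this proposition at all: it is quoted verbatim from Hertweck (\cite{Hertweck2}, Proposition 3.1; \cite{Hertweck1}, Proposition 2.2), so there is no internal proof to compare against. Judged on its own terms, your proposal has a genuine gap at exactly the point you yourself flag as ``the final and hardest step.'' Everything before that is either correct but unused (the mod-$p$ congruence $\varepsilon_D(u^p)\equiv\sum_{C:\,C^{(p)}=D}\varepsilon_C(u)\pmod p$ is true, but as you admit it only yields divisibility by $p$, never vanishing, and it plays no role in your final argument) or correct but disproportionate (invoking Weiss's rigidity theorem to handle the $p$-power-order unit $u_p$ is legitimate but a far heavier tool than the statement requires). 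The decisive claim --- that a conjugacy class $C$ can carry a nonzero partial augmentation of $u$ only if the class of the $p$-parts of its elements carries a nonzero partial augmentation of $u_p$ --- is asserted without argument; it is at least as strong as the proposition itself, and writing $u=u_pu_{p'}$ via idempotents of $\mathbb{Z}_p\langle u\rangle$ gives no mechanism for controlling partial augmentations class by class. Deferring this step to ``the content supplied by the cited results of Hertweck'' makes the proof circular: that content \emph{is} the proposition.

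For the record, Hertweck's actual argument does not descend from $u_p$ to $u$ at all. One regards $M=\mathbb{Z}_pG$ as a lattice over $\mathbb{Z}_p[\langle u\rangle\times G]$ via $(v,g)\cdot m=vmg^{-1}$ and computes that the character of $M$ at $(u,x)$ equals $|C_G(x)|\,\varepsilon_x(u)$. Since $M$ is free on restriction to $1\times G$, every vertex of an indecomposable summand meets $1\times G$ trivially and is therefore the graph of a homomorphism from a $p$-subgroup of $\langle u\rangle$ into $G$, hence has exponent at most $p^s$. The $p$-part of $(u,x)$ has order $p^r>p^s$, so it is not conjugate into any vertex, and Green's theorem on characters of relatively projective lattices forces the character value, and with it $\varepsilon_x(u)$, to vanish. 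This single vertex computation replaces both your Weiss step and the missing descent; if you want a complete proof, that is the ingredient to supply.
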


The main restriction on the partial augmentations is
given by the following result.

\begin{proposition}
[see \cite{Hertweck1,Luthar-Passi}]
\label{P:1}
Let either $p=0$ or $p$ is a
prime divisor of $|G|$. Suppose that $u\in V( \mathbb Z G) $ has
finite order $k$ and assume that $k$ and $p$ are coprime when
$p\neq 0$. If $z$ is a complex primitive $k$-th root of unity and $\chi$
is either a classical character or a $p$-Brauer character of $G$
then, for every integer $l$, the number
\begin{equation}\label{E:2}
\mu_l(u,\chi, p )=\textstyle\frac{1}{k} \sum_{d|k}Tr_{ \mathbb Q
(z^d)/ \mathbb Q } \{\chi(u^d)z^{-dl}\}
\end{equation}
is an integer such that \; $0 \le \mu_l(u,\chi, p ) \le \text{deg}(\chi)$.
\end{proposition}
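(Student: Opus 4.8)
The plan is to show that the rational number $\mu_l(u,\chi,p)$ is nothing but the multiplicity with which the power $z^l$ occurs among the eigenvalues attached to $u$ by a representation affording $\chi$. Once this identification is made, both assertions are immediate: a multiplicity is a non-negative integer, and since the multiplicities of all eigenvalues sum to $\deg(\chi)$, each one lies between $0$ and $\deg(\chi)$.

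First I would fix a representation $\rho$ affording $\chi$ and record the eigenvalue data of $\rho(u)$. When $\chi$ is an ordinary character, $u^k=1$ forces $\rho(u)$ to be diagonalizable with eigenvalues among the $k$-th roots of unity; when $\chi$ is a $p$-Brauer character, the hypothesis $\gcd(k,p)=1$ makes $u$ a $p$-regular element, and the value $\chi(u^d)$ is by definition the sum of the Brauer lifts of the eigenvalues of $\rho(u^d)$, which are again $k$-th roots of unity. In either case, writing $m_a$ for the multiplicity of $z^a$ (for $0 \le a < k$), one has
\[
\chi(u^d)=\sum_{a=0}^{k-1} m_a\, z^{ad}, \qquad \sum_{a=0}^{k-1} m_a=\deg(\chi),
\]
with each $m_a$ a non-negative integer. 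The orthogonality relations for the cyclic group $\langle z\rangle$ (discrete Fourier inversion) then yield
\[
m_l=\frac{1}{k}\sum_{a=0}^{k-1}\chi(u^a)\,z^{-al},
\]
which is visibly an integer in $\{0,1,\dots,\deg(\chi)\}$.

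It remains to match this full sum over $a$ with the divisor-indexed sum of Galois traces in~\eqref{E:2}. Here I would partition the residues $a\in\{0,\dots,k-1\}$ according to $d=\gcd(a,k)$, so that $a=dt$ with $t$ running over a reduced residue system modulo $k/d$; since $z^d$ is a primitive $(k/d)$-th root of unity, the assignments $z^d\mapsto z^{dt}$ realise exactly $\mathrm{Gal}(\mathbb{Q}(z^d)/\mathbb{Q})$. Using $m_a\in\mathbb{Z}$ to see that $\chi(u^{dt})$ is the Galois conjugate of $\chi(u^d)$ under $z^d\mapsto z^{dt}$, each block collapses to a trace,
\[
\sum_{\gcd(a,k)=d}\chi(u^a)z^{-al}=\mathrm{Tr}_{\mathbb{Q}(z^d)/\mathbb{Q}}\{\chi(u^d)z^{-dl}\},
\]
and summing over $d\mid k$ gives $k\,\mu_l(u,\chi,p)=\sum_{a=0}^{k-1}\chi(u^a)z^{-al}=k\,m_l$, which completes the identification.

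The step I expect to be delicate is the Brauer-character case of the eigenvalue bookkeeping: one must justify that for a $p$-regular $u$ the Brauer character values genuinely arise as sums of $k$-th roots of unity with well-defined integer multiplicities $m_a$, and that these values transform correctly under the relevant Galois group, so that the collapse of each block to $\mathrm{Tr}_{\mathbb{Q}(z^d)/\mathbb{Q}}$ is valid. For ordinary characters this is routine cyclotomic number theory; the modular case is precisely where the coprimality hypothesis $\gcd(k,p)=1$ and the defining properties of the Brauer lift are used, and this is the content drawn from~\cite{Hertweck1}.
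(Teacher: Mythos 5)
Your argument is correct, and it is essentially the standard proof of this result: the paper itself states Proposition~\ref{P:1} without proof, citing \cite{Luthar-Passi} and \cite{Hertweck1}, and your identification of $\mu_l(u,\chi,p)$ with the multiplicity of the eigenvalue $z^l$ of $\rho(u)$ via Fourier inversion, followed by grouping the residues $a$ with $\gcd(a,k)=d$ into Galois orbits to recover the divisor-indexed trace sum, is exactly the argument of those sources. The one point you rightly flag as delicate --- that for a $p$-regular torsion unit $u\in V(\mathbb ZG)$ the Brauer character values $\chi(u^d)$ are sums of Brauer lifts of $k$-th roots of unity with consistent integer multiplicities --- is precisely Hertweck's contribution in \cite{Hertweck1}, so attributing it there rather than reproving it is appropriate.
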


Note that if $p=0$, we will use the notation $\mu_l(u, \chi, * )$
for $\mu_l(u,\chi, 0)$.

When $p$ and $q$ are two primes such that $G$ contains no element of order $pq$,
and $u$ is a normalized torsion unit of order $pq$, Proposition \ref{P:1} may be
reformulated for as follows. Let $\nu_k$ be
the sum of partial augmentations of $u$ with respect all conjugacy classes
of elements of order $k$ in $G$, i.e. $\nu_2 = \nu_{2a}+\nu_{2b}$, etc.
Then by (\ref{E:1}) and Proposition \ref{P:4} we obtain that
$\nu_p + \nu_q = 1$ and $\nu_k = 0$ for $k \notin \{ p, q \}$. For
each character $\chi$ of $G$ (an ordinary character or a Brauer character
in characteristic not dividing $p q$)
that is constant on all elements of orders $p$ and on all elements
of order $q$,
we have $\chi(u) = \nu_p \chi(C_p) + \nu_q \chi(C_q)$,
where $\chi(C_t)$ denote the value of the character $\chi$ on any element
of order $t$ from $G$.

From the Proposition \ref{P:1} we obtain that the values
\begin{equation}\label{E:3}
\begin{split}
\mu_l(u, \chi) =
\textstyle \frac{1}{p q} ( \; \chi(1)
& + Tr_{\mathbb Q(z^p)/ \mathbb Q} \{ \chi(u^p) z^{-pl} \} \\
& + Tr_{\mathbb Q(z^q)/ \mathbb Q} \{ \chi(u^q) z^{-ql} \}
  + Tr_{\mathbb Q(z)/ \mathbb Q} \{ \chi(u) z^{-l} \} \; )
\end{split}
\end{equation}
are nonnegative integers. It follows that if $\chi$ has the specified
property, then
\begin{equation}\label{E:4}
  \mu_l(u, \chi) =
     \textstyle \frac{1}{p q} \left( m_1 +  \nu_p m_p + \nu_q m_q \right),
\end{equation}
where
\begin{equation}\label{E:5}
\begin{split}
m_1 & = \chi(1) + \chi(C_q) Tr_{\mathbb Q(z^p)/\mathbb Q}( z^{-pl} )
              + \chi(C_p) Tr_{\mathbb Q(z^q)/\mathbb Q}( z^{-ql} ), \\
m_p & = \chi(C_p) Tr_{\mathbb Q(z)/\mathbb Q}( z^{-l} ), \qquad
m_q   = \chi(C_q) Tr_{\mathbb Q(z)/\mathbb Q}( z^{-l} ).
\end{split}
\end{equation}

Finally, we shall use the well-known bound for
orders of torsion units.

\begin{proposition}
[see \cite{Cohn-Livingstone}]
\label{P:2}
The order of a torsion element $u\in V(\mathbb ZG)$
is a divisor of the exponent of $G$.
\end{proposition}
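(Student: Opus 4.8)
The plan is to reformulate the divisibility $\lvert u\rvert \mid \exp(G)$ as the operator identity $u^{\exp(G)}=1$ and to attack it through the complex representations of $G$. By Maschke's theorem and the Wedderburn decomposition $\mathbb{C}G\cong\prod_i M_{n_i}(\mathbb{C})$, an element of $\mathbb{Z}G$ is trivial precisely when it is annihilated in every irreducible representation, so it suffices to prove that for every irreducible complex representation $\rho$ (with character $\chi$) the matrix $\rho(u)$ satisfies $\rho(u)^{\exp(G)}=I$. Since $u$ is torsion of order $k$, each $\rho(u)$ is diagonalisable with eigenvalues that are $k$-th roots of unity, so the claim becomes: every eigenvalue of $\rho(u)$ has order dividing $\exp(G)$. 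I would first reduce to the case where $k=p^{a}$ is a prime power by replacing $u$ with $u^{k/p^{a}}$ for each prime $p\mid k$, recovering the full statement $k\mid\exp(G)$ from the fact that each prime-power part of $k$ must then divide the corresponding part of $\exp(G)$.

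For the main estimate I would exploit two facts about the numbers $\chi(u^{j})$. On the one hand, because every $g\in G$ satisfies $g^{\exp(G)}=1$, all ordinary character values lie in $\mathbb{Z}[\zeta_{\exp(G)}]$, and since $u$ has integer coefficients we get $\chi(u^{j})\in\mathbb{Z}[\zeta_{\exp(G)}]$ for every $j$. On the other hand, writing $\lambda_{1},\dots,\lambda_{d}$ for the eigenvalues of $\rho(u)$, we have the power-sum expressions $\chi(u^{j})=\sum_{i}\lambda_{i}^{j}$, which determine the multiset $\{\lambda_{i}\}$ via Newton's identities. Any Galois automorphism $\sigma_{t}\colon\zeta_{k}\mapsto\zeta_{k}^{t}$ with $t\equiv 1\pmod{\exp(G)}$ and $\gcd(t,k)=1$ fixes every $\chi(u^{j})$, hence fixes the characteristic polynomial of $\rho(u)$ and therefore permutes the eigenvalues by $\lambda_{i}\mapsto\lambda_{i}^{t}$. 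To guarantee that a primitive $k$-th root of unity actually occurs among the $\lambda_{i}$, I would apply the Berman--Higman theorem quoted above to the regular representation: there $\chi_{\mathrm{reg}}(u^{j})$ equals $|G|$ when $u^{j}=1$ and vanishes otherwise, which pins down the eigenvalue multiset of $\rho_{\mathrm{reg}}(u)$ and exhibits every $k$-th root of unity as an eigenvalue.

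The delicate point, and the step I expect to be the main obstacle, is to convert this Galois and eigenvalue bookkeeping into a genuine restriction on $k$. The purely character-theoretic conditions above are in fact consistent with any value of $k$; what ultimately forces $k\mid\exp(G)$ is the integrality of the coefficients $\alpha_{g}$ of $u$ itself, not merely of its traces. Reduction modulo $p$ illustrates the difficulty: the unipotent image of a $p$-power-order unit in $\mathbb{F}_{p}G$ can have order exceeding $\exp(G)$, so the integral structure cannot be discarded. Carrying out this integral argument is precisely the content of the Cohn--Livingstone analysis, which I would invoke to complete the proof; for the present paper only the resulting a priori bound on $\lvert u\rvert$ is needed, after which Proposition~\ref{P:4} and Proposition~\ref{P:1} take over.
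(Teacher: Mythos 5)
The paper offers no proof of this proposition at all: it is quoted as a known theorem of Cohn and Livingstone, so there is nothing internal to compare your argument against. Judged on its own terms, your proposal has a genuine gap, and you name it yourself. The reductions you carry out are sound as far as they go (reduction to prime-power order $p^{a}$; diagonalisability of $\rho(u)$ with $k$-th roots of unity as eigenvalues; $\chi(u^{j})\in\mathbb{Z}[\zeta_{\exp(G)}]$; Galois stability of the eigenvalue multisets; the Berman--Higman computation showing that $\rho_{\mathrm{reg}}(u)$ has each $k$-th root of unity as an eigenvalue with multiplicity $|G|/k$). But, as you concede, these conditions are consistent with every $k$ dividing $|G|$ --- the regular representation's eigenvalue multiset is automatically stable under every map $\lambda\mapsto\lambda^{t}$, so the Galois constraint is vacuous there --- and you then close the argument by ``invoking the Cohn--Livingstone analysis.'' That analysis \emph{is} the statement to be proved, so the proposal proves nothing beyond $k$ dividing $|G|$.

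The missing idea is not a sharpening of the trace and Galois bookkeeping but a different, genuinely integral congruence. With $|u|=p^{a}$ and $u=\sum_{g}\alpha_{g}g$, one uses the Frobenius-type congruence $u^{p}\equiv\sum_{g}\alpha_{g}g^{p}$ modulo $p\,\mathbb{Z}G+[\mathbb{Z}G,\mathbb{Z}G]$ (the cross terms in the multinomial expansion fall into cyclic classes of size $p$ whose members differ by Lie commutators, and $\alpha_{g}^{p}\equiv\alpha_{g}\pmod{p}$). Iterating and applying the class function $\varepsilon_{\{1\}}$, which kills commutators, gives $\sum_{g\,:\,g^{p^{a}}=1}\alpha_{g}(u)\equiv\varepsilon_{\{1\}}(u^{p^{a}})=\varepsilon_{\{1\}}(1)=1\pmod{p}$. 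If $G$ had no element of order $p^{a}$, every $g$ with $g^{p^{a}}=1$ would already satisfy $g^{p^{a-1}}=1$, and the same congruence applied to $u^{p^{a-1}}$ would force $\varepsilon_{\{1\}}(u^{p^{a-1}})\equiv 1\pmod{p}$; but $u^{p^{a-1}}$ is a nontrivial torsion unit, so the Berman--Higman theorem gives $\varepsilon_{\{1\}}(u^{p^{a-1}})=0$, a contradiction. This congruence sees the individual integer coefficients $\alpha_{g}$, which is exactly the information your character-theoretic framework discards; that is why your setup cannot be completed without importing the cited result wholesale.
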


\begin{proof}[Proof of Theorem \ref{T:1}.]
Throughout the proof we denote by $G$ the Held sporadic simple
group $\verb"He"$. It is well known \cite{Atlas,GAP} that
$|G|= 2^{10} \cdot 3^{3} \cdot 5^{2} \cdot 7^{3} \cdot 17$ and
$exp(G)= 2^{3} \cdot 3 \cdot 5 \cdot 7 \cdot 17$.
The character table of $G$, as well as the Brauer character tables
for $p \in \{2, 3, 5, 7, 17\}$
can be found by the computational algebra system GAP \cite{GAP},
which derives its data from \cite{Atlas,AtlasBrauer}.
Throughout the paper we will use the notation of GAP Character
Table Library for the characters and conjugacy classes of the
group $\verb"He"$.

It is known that $\verb"He"$
possesses elements of orders $2$, $3$, $4$, $5$, $6$, $7$, $8$,
$10$, $12$, $14$, $15$, $17$, $21$ and $28$.
From this we can derive that to deal with {\bf (KC)}, we need to show
that $V(\mathbb ZG)$ has no units of orders $34$, $35$, $51$, $85$ and $119$.
Therefore, there are also no units of orders divisible by any number from
this list.
Since by Proposition \ref{P:2}, the order
of each torsion unit divides the exponent of $G$, the only remaining
opportunities for the order are 20, 24, 30, 40, 42, 56, 60, 84, 120 and 168,
explaining formulation of part (i) of Theorem \ref{T:1}.

We will begin with orders that do not appear in $G$, and will give a
detailed proof for the order 35. The proof for the other cases can be
derived similarly from the table below, which contains the data describing
the constraints on partial augmentations $\nu_p$ and $\nu_q$
for possible orders $pq$ (including the order 35 as well)
accordingly to (\ref{E:3})--(\ref{E:5}).

%
%

If $u$ is a unit of order $35$, then $\nu_{5}+\nu_{7}=1$.
Consider 2-Brauer characters $\xi = \chi_{1}+\chi_{2}+\chi_{3}+\chi_{6}+\chi_{8}$
and $\tau = \chi_{6}+\chi_{7}+\chi_{8}+\chi_{9}$, which are encoded
in the table as $\xi=(1,2,3,6,8)_{[2]}$ and $\tau=(6,7,8,9)_{[2]}$
respectively. These characters
are constant of elements of order 5 and elements of order 7:
$\xi(C_5) = 4$, $\xi(C_{7}) = 0$, $\tau(C_5) = -8$ and $\tau(C_{7}) = 5$.
Now we obtain the system of inequalities
\[
\begin{split}
\mu_{0}(u,\xi,2) & = \textstyle \frac{1}{35}(96 \nu_{5} + 1045) \geq 0; \quad
\mu_{7}(u,\xi,2)   = \textstyle \frac{1}{35}(-24 \nu_{5} + 1025) \geq 0; \\
& \quad \mu_{0}(u,\tau,2)  = \textstyle \frac{1}{35}(-192 \nu_{5} + 120 \nu_{7} + 3090) \geq 0, \\
\end{split}
\]
which
has no nonnegative integral solution $(\nu_{5},\nu_{7})$
such that all $\mu_i(u,\chi_{j},2)$ are
non-negative integers.

The data for orders 34, 51, 85 and 119 are given in the following table.

\bigskip
\centerline{\small{
\begin{tabular}{|c|c|c|c|c|c|c|c|c|c|c|c|c|c|}
\hline
$|u|$&$p$&$q$&$\xi, \; \tau$&$\xi(C_p)$&$\xi(C_q)$&$l$&$m_1$&$m_p$&$m_q$ \\
\hline
   &   &   &                       &    &   & 0 & 5322 & 1104  & 0 \\
34 & 2 & 17& $\xi=(7, 8, 9, 12)_{[3]}$ & 69 & 0 & 2 & 5322 & -69   & 0 \\
   &   &   &                       &    &   & 17& 5184 & -1104 & 0 \\
\hline
   &   &   & $\xi=(1,2,3,6,8)_{[2]}$   & 4  & 0 & 0 & 1045 & 96   & 0 \\
35 & 5 & 7 & $\xi=(1,2,3,6,8)_{[2]}$   & 4  & 0 & 7 & 1025 & -24  & 0 \\
   &   &   & $\tau=(6,7,8,9)_{[2]}$     & -8 & 5 & 0 & 3090 & -192 & 120 \\
\hline
   &   &   & $\xi=(2,4,5)_{[*]}$       & 6  & 0 & 0 & 369  & 192  & 0 \\
51 & 3 & 17& $\xi=(2,4,5)_{[*]}$       & 6  & 0 & 17& 351  & -96  & 0 \\
   &   &   & $\tau=(24,28,33)_{[*]}$    & -7 & 0 & 1 & 5299 & -7   & 0 \\
\hline
85 & 5 & 17& $\xi=(2)_{[*]}$           & 1  & 0 & 0 & 55   & 64   & 0 \\
   &   &   &                       &    &   & 17& 50   & -16  & 0 \\
\hline
119& 7 & 17& $\xi=(1,9,15)_{[*]}$      & 2  & 0 & 0 & 7560 & 192  & 0 \\
   &   &   & $\tau=(2,3,14)_{[*]}$      & -5 & 0 & 0 & 4424 & -480 & 0 \\
\hline
\end{tabular}
}}
\bigskip

\noindent
Therefore, part (i) of Theorem \ref{T:1} is proved. Now it remains
to consider elements of orders covered by parts (ii)--(v) of Theorem \ref{T:1}.

%
%

\noindent $\bullet$ Let $|u|=5$. Since there is only one
conjugacy class in $G$ consisting of elements or order $5$,
the Proposition \ref{P:4} yields immediately that
for units of order 5 that there is precisely one
conjugacy class with non-zero partial augmentation and
by Proposition \ref{P:5} part (iii) of Theorem \ref{T:1} is proved.

%
%

\noindent$\bullet$ Let $u$ be an involution. By (\ref{E:1}) and
Proposition \ref{P:4} we get $\nu_{2a}+\nu_{2b}=1$.  Applying
Proposition \ref{P:1} to the character $\chi_{2}$ with
$\chi_2(2a)=11$, $\chi_2(2b)=3$, we obtain
\[
\begin{split}
\mu_{0}(u,\chi_{2},*) & = \textstyle \frac{1}{2} (11 \nu_{2a} + 3 \nu_{2b} + 51) \geq 0; \\ 
\mu_{1}(u,\chi_{2},*) & = \textstyle \frac{1}{2} (-11 \nu_{2a} - 3 \nu_{2b} + 51) \geq 0. \\ 
\end{split}
\]
From the requirement that all $\mu_i(u,\chi_{j},*)$ must be
non-negative integers it can be deduced that $(\nu_{2a},\nu_{2b})$
satisfies the conditions of part (ii) of Theorem \ref{T:1}.

%
%

\noindent $\bullet$ Let $u$ has order $3$. By (\ref{E:1})
and Proposition \ref{P:4} we obtain that
$\nu_{3a}+\nu_{3b}=1$. Then using Proposition \ref{P:1}
for the character $\chi_{2}$, we get the system
\[
\begin{split}
\mu_{0}(u,\chi_{2},*) & = 4 \nu_{3a} + 17 \geq 0; \qquad 
\mu_{1}(u,\chi_{2},*)   = -2 \nu_{3a} + 17 \geq 0; \\ 
& \quad \mu_{0}(u,\chi_{4},2)   = \textstyle \frac{1}{3} (-14 \nu_{3a} + 4 \nu_{3b} + 101) \geq 0, \\ 
\end{split}
\]
that has only 10 integer solutions
$(\nu_{3a},\nu_{3b})$ listed in the part (iv)
of Theorem \ref{T:1}, such that all $\mu_i(u,\chi_{j},*)$ are
non-negative integers.

%
%

\noindent$\bullet$ Let $u$ of order $17$. By (\ref{E:1})
and Proposition \ref{P:4} we get $\nu_{17a} + \nu_{17b} = 1$.
Applying (\ref{E:2}) to the ordinary character
$\chi_7$ with $\chi_7(17a)=\frac{1-\sqrt{17}}{2}$ and
$\chi_7(17b)=\frac{1+\sqrt{17}}{2}$, and to the
2-Brauer character $\chi_6$ with $\chi_6(17a)=\frac{-1+\sqrt{17}}{2}$ and
$\chi_6(17b)=\frac{-1-\sqrt{17}}{2}$, and putting
$t = 9 \nu_{17a} - 8 \nu_{17b}$, we obtain the system
of inequalities
\[
\begin{split}
\mu_{1}(u,\chi_{7},*) & = \textstyle \frac{1}{17} (- t + 1029) \geq 0; \quad 
\mu_{1}(u,\chi_{6},2)   = \textstyle \frac{1}{17} (  t + 246) \geq 0; \\ 
& \mu_{3}(u,\chi_{6},2)   = \textstyle \frac{1}{17} (-8 \nu_{17a} + 9 \nu_{17b} + 246) \geq 0, \\ 
\end{split}
\]
that has 30 integer solutions
$(\nu_{17a},\nu_{17b})$ listed in  part (v) of
Theorem \ref{T:1}, such that all $\mu_i(u,\chi_{j},*)$ are
non-negative integers. \end{proof}

\begin{proof}[Proof of Theorem \ref{T:2}.]
Throughout the proof we denote by $G$ the O'Nan sporadic simple
group $\verb"ON"$ of order
$|G|= 2^{9} \cdot 3^{4} \cdot 5 \cdot 7^{3} \cdot 11 \cdot 19 \cdot 31$
and
$exp(G)= 2^{4} \cdot 3 \cdot 5 \cdot 7 \cdot 11 \cdot 19 \cdot 31$.
Besides ordinary character tables, for $G$ also Brauer characters tables are
known for $p \in \{2, 3, 5, 7, 11, 19, 31\}$ (see \cite{Atlas,GAP,AtlasBrauer}).
As before, we use the GAP notation for the characters and conjugacy classes of $G$.

It is known that $G$ possesses elements of orders
1, 2, 3, 4, 5, 6, 7, 8, 10, 11, 12, 14, 15, 16, 19, 20, 28 and 31.
The Kimmerle conjecture
requires us to consider possible units of orders
$21$, $22$, $33$, $35$, $38$, $55$, $57$, $62$, $77$, $93$,
$95$, $133$, $155$, $209$, $217$, $341$ and $589$. As in the
proof of Theorem \ref{T:1}, below we give the table containing
the data describing the constraints on partial augmentations
$\nu_p$ and $\nu_q$ accordingly to (\ref{E:3})--(\ref{E:5}) for
all these orders, except order 22. From this table
parts (i), (v) and (vi) of Theorem \ref{T:2}
are derived in the same way as in the proof of Theorem \ref{T:1},
except orders 22 and 35 which will be treated separately.
Since we are not able to prove the non-existence of units of orders
33 and 57, and also do not consider orders with more than two
prime factors, the condition $\exp(G)\equiv 0 \pmod{|u|}$ (see
Proposition \ref{P:2}) results in listing orders
24, 30, 33, 40, 48, 56, 57, 60, 80, 112, 120 and 240
in the ``exclusive'' part (i) of Theorem \ref{T:2}.

\bigskip
\centerline{\small{
\begin{tabular}{|c|c|c|c|c|c|c|c|c|c|c|c|c|c|}
\hline
$|u|$&$p$&$q$&$\xi, \; \tau$&$\xi(C_p)$&$\xi(C_q)$&$l$&$m_1$&$m_p$&$m_q$ \\
\hline
   &   &   &                       &    &   & 0 & 98493 & 312  & 0 \\
21 & 3 & 7 & $(1, 3, 9, 10)_{[5]}$ & 26 & 0 & 1 & 98415 & 26   & 0 \\
   &   &   &                       &    &   & 7 & 98415 & -156 & 0 \\
\hline
   &   &   & $\xi=(3)_{[7]}$       & 6  & 0 & 0 & 1233  & 120  & 0 \\
33 & 3 & 11& $\xi=(3)_{[7]}$       & 6  & 0 & 11& 1215  & -60  & 0 \\
   &   &   & $\tau=(23)_{[7]}$     & 4  & 0 & 0 & 143382& 80   & 0 \\
\hline
35 & 5 & 7 & $\xi=(1,3)_{[3]}$     & -2 & 0 & 0 & 335   & -48  & 0 \\
   &   &   &                       &    &   & 7 & 345   & 12   & 0 \\
\hline
   &   &   &                       &    &   & 0 & 70358 & 4806  & 0 \\
38 & 2 & 19& $\xi=(2,7,8)_{[*]}$   & 267& 0 & 1 & 69824 & 267   & 0 \\
   &   &   &                       &    &   & 19& 69824 & -4806 & 0 \\
\hline
   &   &   &                       &    &   & 0 & 415 & 80  & 0 \\
55 & 5 & 11& $\xi=(1,2)_{[7]}$     & 2  & 0 & 5 & 415 & -8  & 0 \\
   &   &   &                       &    &   & 11& 405 & -20 & 0 \\
\hline
   &   &   & $\xi=(1,2,8,9)_{[7]}$     & 18 & 0 & 0 & 21924 & 648  & 0 \\
57 & 3 & 19& $\xi=(1,2,8,9)_{[7]}$     & 18 & 0 & 0 & 36369 & -972 & 0 \\
   &   &   & $\tau=(23)_{[7]}$         & 4  & 0 & 1 & 143370& 4    & 0 \\
\hline
   &   &   &                       &    &   & 0 & 150 & -150& 0 \\
62 & 2 & 31& $\xi=(1,2)_{[3]}$     & -5 & 0 & 2 & 150 & 5   & 0 \\
   &   &   &                       &    &   & 31& 160 & 150 & 0 \\
\hline
77 & 7 & 11& $\xi=(1,3)_{[3]}$     & 0  & 2 & 0 & 363 & 0   & 120 \\
   &   &   &                       &    &   & 11& 363 & 0   & -20 \\
\hline
   &   &   &                       &    &   & 0 & 26799 & -1380 & 0 \\
93 & 3 & 31& $\xi=(1,4,5)_{[2]}$   & 23 & 0 & 3 & 26799 & -46   & 0 \\
   &   &   &                       &    &   & 31& 26730 & -690  & 0 \\
\hline
\end{tabular}
}}

\bigskip
\centerline{\small{
\begin{tabular}{|c|c|c|c|c|c|c|c|c|c|c|c|c|c|}
\hline
$|u|$&$p$&$q$&$\xi, \; \tau$&$\xi(C_p)$&$\xi(C_q)$&$l$&$m_1$&$m_p$&$m_q$ \\
\hline
95 & 5 & 19& $\xi=(1,2)_{[3]}$         & 0  & 3 & 0 & 209 & 0   & 216 \\
   &   &   &                       &    &   & 19& 209 & 0   & -54 \\
\hline
133& 7 & 19& $\xi=(1,3)_{[3]}$         & 0  & 1 & 0 & 361 & 0   & 108 \\
   &   &   &                       &    &   & 19& 361 & 0   & -18 \\
\hline
   &   &   &                       &    &   & 0 & 480 & -480 & 0 \\
155& 5 & 31& $\xi=(2,3)_{[3]}$         & -4 & 0 & 5 & 480 & 16   & 0 \\
   &   &   &                       &    &   & 31& 500 & 120  & 0 \\
\hline
209& 11& 19& $\xi=(2,5)_{[3]}$         & 0  & 3 & 0 & 703 & 0   & 540 \\
   &   &   &                       &    &   & 19& 703 & 0   & -54 \\
\hline
217& 7 & 31& $\xi=(1,3)_{[3]}$         & 0  & 2 & 0 & 403 & 0   & 360 \\
   &   &   &                       &    &   & 31& 403 & 0   & -60 \\
\hline
341& 11& 31& $\xi=(1,2)_{[3]}$         & 1  & 0 & 0  & 165 & 300 & 0 \\
   &   &   &                       &    &   & 31 & 154 & -30 & 0 \\
\hline
589& 19& 31& $\xi=(1,2)_{[3]}$         & 3  & 0 & 0  & 209 & 1620 & 0 \\
   &   &   &                       &    &   & 31 & 152 & -90  & 0 \\
\hline
\end{tabular}
}}
\bigskip

\noindent
$\bullet$ Let $u$ be a torsion unit of order $22$. By (\ref{E:1})
and Proposition \ref{P:4} we have that $\nu_{2a}+\nu_{11a}=1$.
Then using Proposition \ref{P:1} for the ordinary character
$\chi_2$ and 3-Brauer character $\chi_2$ and putting
$t=64 \nu_{2a} -  \nu_{11a}$, we obtain the system
\[
\begin{split}
\mu_{0}(u,\chi_{2},3) & = \textstyle \frac{1}{22} (-60 \nu_{2a} + 148) \geq 0; \qquad 
\mu_{11}(u,\chi_{2},3)  = \textstyle \frac{1}{22} (60 \nu_{2a} + 160) \geq 0; \\ 
\mu_{1}(u,\chi_{2},*) & = \textstyle \frac{1}{22} ( t + 10881) \geq 0; \qquad \qquad 
\mu_{11}(u,\chi_{2},*)  = \textstyle \frac{5}{11} (-t + 1087) \geq 0; \\ 
& \qquad \qquad \mu_{0}(u,\chi_{2},*)   = \textstyle \frac{1}{22} ( 10 t + 10998) \geq 0, \\ 
\end{split}
\]
which has no integral solutions such that all $\mu_i(u,\chi_{j},*)$ are
non-negative integers.

\noindent $\bullet$ Let $|u|=35$.
By (\ref{E:1}) and Proposition \ref{P:4}, $\nu_{5a}+\nu_{7a}+\nu_{7b}=1$.
Also, from the table above we have that $\nu_{5a}=-20$. This restriction
greatly facilitates the next step, when using Proposition \ref{P:1} for
the ordinary character $\chi_2$ with $\chi_2(7a)=17$, $\chi_2(7b)=3$
and putting $t = 17 \nu_{7a} + 3 \nu_{7b}$, we obtain two incompatible
constraints
\[
\begin{split}
\mu_{0}(u,\chi_{2},*)  = \textstyle \frac{1}{35} (24 t + \alpha_1) \geq 0; \qquad 
\mu_{7}(u,\chi_{2},*)  = \textstyle \frac{1}{35} (-6 t + \alpha_2) \geq 0, \\ 
\end{split}
\]
where the values of $\alpha_1$ and $\alpha_2$
parametrised by $\chi(u^{5})=k_1 \chi(7a) + k_2 \chi(7b)$
are given in the following table.

\bigskip
\centerline{\small{
\begin{tabular}{|c|c|c|c|}
\hline
$\quad (k_1,k_2) \quad $ & $\qquad(\alpha_1, \alpha_2)\qquad$ &
$\quad (k_1,k_2) \quad $ & $\qquad(\alpha_1, \alpha_2)\qquad$ \\ \hline
(1,0)    & (11522,10927) & (11,-10) & (12362,11767) \\ \hline
(0,1)    & (11438,10843) &(10,-9) &(12278,11683)\\ \hline
(22,-21) & (13286, 12691)& (9,-8)& (12194,11599)\\ \hline
(21,-20) & (13202,12607)& (8,-7) & (12110,11515) \\ \hline
(20,-19) & (13118,12523)& (7,-6)& (12026,11431)\\ \hline
(19,-18)& (13034,12439)& (6,-5)& (11942,11347)\\ \hline
(18,-17) & (12950,12355)&(5,-4) &(11858,11263)\\ \hline
(17,-16) & (12866,12271)& (4,-3)& (11774,11179)\\ \hline
(16,-15) & (12782,12187) & (3,-2)& (11690,11095)\\ \hline
(15,-14) &(12698,12103)& (2,-1)  & (11606,11011) \\ \hline
(14,-13)& (12614,12019)  & (-1,2) & (11354,10759)\\ \hline
(13,-12) & (12530,11935) &(-2,3)&(11270,10675)\\ \hline
(12,-11) & (12446, 11851)& (-3,4)& (11186, 10591)\\ \hline
\end{tabular}
}}
\bigskip

\newpage
Now it remains to consider elements of orders that appear in the group $G$.

\noindent $\bullet$ Let $|u| \in \{ 2,3,5,11 \}$.
Since there is only one conjugacy class in $G$
consisting of elements or each of these orders,
the Proposition \ref{P:4} yields immediately that
for such units there is precisely one
conjugacy class with non-zero partial augmentation and
by Proposition \ref{P:5} part (ii) of Theorem \ref{T:2} is proved.

\noindent $\bullet$ Let $u$ has order $7$. By (\ref{E:1})
and Proposition \ref{P:4} we obtain that
$\nu_{7a}+\nu_{7b}=1$. Put $t=-17 \nu_{7a} - 3 \nu_{7b}$.
Then using Proposition \ref{P:1}
for the ordinary character $\chi_2$ with values
$\chi_2(7a)=17$, $\chi_2(7b)=3$ and 3-Brauer character $\chi_2$
with $\chi_2(7a)=7$, $\chi_2(7b)=0$
we get the system of inequalities
\[
\begin{split}
\mu_{0}(u,\chi_{2},*) & = \textstyle \frac{1}{7} ( -6t + 10944) \geq 0; \quad \quad  
\mu_{1}(u,\chi_{2},*)   = \textstyle \frac{1}{7} ( t + 10944) \geq 0; \\ 
\mu_{0}(u,\chi_{2},3) & = \textstyle \frac{1}{7} (42 \nu_{7a} + 154) \geq 0; \qquad 
\mu_{1}(u,\chi_{2},3)   = \textstyle \frac{1}{7} (-7 \nu_{7a} + 154) \geq 0, \\ 
\end{split}
\]
that has 26 integer solutions
$(\nu_{7a},\nu_{7b})$ listed in the part (iii)
of Theorem \ref{T:2}, such that all $\mu_i(u,\chi_{j},*)$ are
non-negative integers.

\noindent $\bullet$ Let $|u|=31$. By (\ref{E:1})
and Proposition \ref{P:4} we obtain that
$\nu_{31b}=1-\nu_{31a}$.
Proposition \ref{P:1} for the 7-Brauer character $\chi_3$ with \;
$\chi_3(31a) = \overline{\chi_3(31b)} = \frac{-7+\sqrt{-31}}{2}$
\; yields
\[
\begin{split}
\mu_{1}(u,\chi_{3},7)  = \textstyle \frac{1}{31} (31 \nu_{31a} + 1209) \geq 0; \quad 
\mu_{3}(u,\chi_{3},7)  = \textstyle \frac{1}{31} (-31 \nu_{31a} + 1240) \geq 0. \\ 
\end{split}
\]
This system of inequalities has 80 integer solutions
$(\nu_{31a},\nu_{31b})$ listed in the part (iv)
of Theorem \ref{T:2}, such that all $\mu_i(u,\chi_{j},*)$ are
non-negative integers.
\end{proof}

\bibliographystyle{plain}
\bibliography{HeON_Kimmerle_JAMS}

\end{document}